\documentclass[11pt,letterpaper]{article}

\usepackage[margin=1in]{geometry}
\usepackage[default]{opensans}
\usepackage{amsmath,amssymb,amsthm}
\usepackage{graphicx}
\usepackage[svgnames,dvipsnames]{xcolor}
\usepackage{hyperref}
\usepackage{natbib}
\usepackage{caption}

\hypersetup{colorlinks=true, linkcolor=blue!50!black, citecolor=teal!60!black, urlcolor=blue!60!black}
\graphicspath{{figures/}{letter/figures/}}

\newtheorem{theorem}{Theorem}
\newtheorem{proposition}[theorem]{Proposition}
\theoremstyle{definition}
\newtheorem{remark}[theorem]{Remark}
\newtheorem{assumption}[theorem]{Assumption}

\newcommand{\IR}{\mathrm{I}\!\mathrm{R}}
\newcommand{\bigO}{\mathcal{O}}
\newcommand{\eps}{\varepsilon}

\title{\textbf{Rockafellian relaxation and minimum-norm slack for the Walrasian equilibrium problem}}
\author{Julio Deride\thanks{Universidad Adolfo Ib\'a\~nez, Pe\~nalol\'en, Santiago, Chile.
		\texttt{julio.deride@uai.cl}}}
\date{\today}

\begin{document}
	
	\maketitle
	
	\begin{abstract}\noindent
		We propose a Rockafellian relaxation of the Walrasian
		equilibrium problem for an exchange economy that may not admit
		one. Market clearing is slackened by a non-negative variable
		$v$ whose norm is penalized; the relaxation is well posed
		throughout. As the penalty grows, the residual converges to a
		vector $v^*_\infty$ of minimum norm in the feasible range of
		excess demand, measuring the distance to the nearest
		equilibrium-admitting economy. A stressed Shapley--Shubik
		example recovers the analytical infeasibility floor to machine
	\end{abstract}
	
	\noindent\textbf{Keywords:} Rockafellian relaxation, Walrasian equilibrium,
	non-existence, augmented Walrasian, constraint perturbation,
	quasi-linear utilities.\\
	\textbf{Mathematics Subject Classification:} 90C30, 91B50, 49J53, 90C46.
	
	\bigskip
	
	\section{Introduction}\label{sec:intro}
	
	In this paper, we propose a Rockafellian relaxation of the
	Walrasian equilibrium problem for an exchange economy and study its
	behavior when no equilibrium exists. The relaxation encodes
	market clearing through a non-negative slack variable
	$v \in \IR^L_+$ and penalizes its norm. The inner minimization in
	$v$ admits a closed form, so the relaxation reduces to a single
	regularized allocation problem in $x$, which we solve at increasing
	values of a penalty parameter $\lambda$. As $\lambda \to \infty$,
	we show that the residual $v^*(\lambda)$ converges to a vector
	$v^*_\infty$ of minimum norm in the feasible range of the
	excess-demand map (Theorem~\ref{thm:nonex}). When an equilibrium
	exists, $v^*_\infty = 0$ and we recover a planner-side
	characterization of equilibrium; when it does not,
	$\|v^*_\infty\|$ measures the distance from the economy to the
	nearest equilibrium-admitting one, and the choice of norm in the
	penalty fixes the policy reading (Remark~\ref{rem:norms}).
	
	Familiar sources of equilibrium non-existence---endowments
	violating survival, short-sale constraints binding tightly in
	incomplete asset markets, non-convex preferences---produce
	computational symptoms that classical schemes cannot interpret.
	The homotopy and fixed-point algorithms of \citet{ScrH73, Eves72,
	Sgal83} and the projection schemes surveyed in \citet{Judd98,
	BrKu08} are designed under the implicit assumption that an
	equilibrium exists; when it fails to, iterates diverge, stagnate,
	or terminate at numerical artifacts that lack economic
	interpretation. We show that the Rockafellian relaxation we
	propose degrades gracefully in such situations and returns a
	minimum-norm slack that admits a direct economic reading.
	
	The construction sits in the family of \emph{centralized}
	reformulations of equilibrium initiated by \citet{Negishi60} and
	underlying the welfare theorems, where competitive equilibrium is
	recovered as a planner optimum. We follow the perturbation
	framework of \citet[Ch.~11]{RocW98} and the \emph{Rockafellian}
	viewpoint articulated in \citet{Royset21}, with the approximation
	and stability theory developed in our companion paper
	\citet{DerR25}. The \emph{augmented Walrasian} iteration of
	\citet{DerJW19, DerJWXR} stabilizes the dual side of the
	equilibrium problem by a similar augmentation device, but does not
	yield a residual diagnostic when equilibrium fails; that is the
	gap this note fills.
	
	The article is organized as follows. Section~\ref{sec:method}
	defines the primal Rockafellian, derives the closed-form reduction
	of the relaxation, and proves the minimum-norm characterization of
	the limit residual. Section~\ref{sec:numerics} illustrates the
	construction on a stressed variant of the Shapley--Shubik
	$2\times 2$ economy of \citet{ShapShub77, BergSY09}.
	Section~\ref{sec:concl} concludes.
	
	\section{The Rockafellian relaxation}\label{sec:method}
	
	We consider an exchange economy $\mathcal{E}$ with $L$ goods and $I$
	agents, indexed by $i = 1, \dots, I$. Each agent has an endowment
	$e^i \in \IR^L_+$, a survival set $X_i \subset \IR^L$, and a concave,
	upper semicontinuous utility $u_i$. We write $X = \prod_i X_i$,
	$U(x) = \sum_i u_i(x_i)$ for the aggregate utility, and
	\[
	Z(x) := \sum_{i=1}^I (x_i - e^i)
	\]
	for the aggregate excess demand at allocation $x$. A Walrasian
	equilibrium is a price vector $p^* \in \IR^L_+$ together with an
	allocation $x^*$ such that each $x^*_i$ maximizes $u_i$ subject to
	$\langle p^*, x_i \rangle \le \langle p^*, e^i \rangle$, with
	$Z(x^*) \le 0$ and $\langle p^*, Z(x^*) \rangle = 0$.
	
	\subsection{The primal Rockafellian and its relaxation}
	
	We follow the perturbation framework of \citet[Ch.~11]{RocW98}
	and \citet{Royset21} and encode market clearing through a
	non-negative slack variable $v \in \IR^L_+$. Define the primal
	Rockafellian
	\begin{equation}\label{eq:F}
		F(x, v) := \begin{cases} -U(x) & \text{if } x \in X,\ v \in \IR^L_+,\ \text{and } Z(x) \le v, \\ +\infty & \text{otherwise.}\end{cases}
	\end{equation}
	Each component $v_\ell$ measures supply slack in good $\ell$. The
	nominal problem $\min_x F(x, 0)$ is the welfare-maximizing planner
	problem under market clearing, so \eqref{eq:F} sits in the family
	of centralized reformulations of the equilibrium problem alongside
	\citet{Negishi60}.
	
	Although $F$ takes the form of a classical perturbation function
	in the sense of \citet{Rocf74}, we follow \citet{Royset21} and
	\citet{DerR25} in calling it a \emph{Rockafellian} to emphasize
	that our interest is in the relaxation~\eqref{eq:relax} as a
	numerically stable substitute problem and in the
	variational-analytic approximation theory it admits, rather than
	in the conjugate dual of the nominal problem.
	
	\begin{proposition}[Centralized characterization, quasi-linear case]\label{prop:planner}
		Suppose $X$ is compact, $U$ is continuous on $X$, and each $u_i$
		is quasi-linear in a common num\'eraire good. If $(x^*, p^*)$ is
		a Walrasian equilibrium of $\mathcal{E}$ and at least one good is
		desired by every agent, then $x^*$ minimizes $F(\cdot, 0)$ on
		$X$, with $p^*$ a Lagrange multiplier for the constraint
		$Z(x) \le 0$.
	\end{proposition}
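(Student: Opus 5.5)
We prove the statement with a first-welfare-theorem style argument adapted to the aggregate objective $U = \sum_i u_i$. The goal is to show that for every feasible $x \in X$ with $Z(x) \le 0$,
\[
U(x) \le U(x^*) + \langle p^*, -Z(x)\rangle \le U(x^*),
\]
together with the saddle-point inequality
\[
U(x) - \langle p^*, Z(x)\rangle \le U(x^*) - \langle p^*, Z(x^*)\rangle = U(x^*) \qquad \text{for all } x \in X,
\]
which makes $p^*$ a Lagrange multiplier. The second display is the real target, since it implies the first whenever $p^* \ge 0$ and $Z(x)\le 0$.

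\textbf{Step 1: normalize prices.} Write $u_i(x_i) = x_{i0} + w_i(\tilde x_i)$, where good $0$ is the common num\'eraire. Every agent's marginal utility of the num\'eraire equals $1$. The hypothesis that some good is desired by every agent gives local nonsatiation, so each budget constraint binds at $x^*_i$ and prices are not all zero. We then argue that $p^*_0 > 0$ and rescale so that $p^*_0 = 1$. If $p^*_0 = 0$, the num\'eraire would be free, and agents would demand unbounded amounts of it, or at least more than the compact $X_i$ permits at a budget-feasible point. This contradicts optimality of $x^*_i$, since raising $x_{i0}$ within $X_i$ strictly raises $u_i$ at zero cost. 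This step needs a mild interiority condition on $X_i$ in the num\'eraire direction, which we will state.

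\textbf{Step 2: individual optimality in Lagrangian form.} We show that $x^*_i$ maximizes $u_i(x_i) - \langle p^*, x_i - e^i\rangle$ over $X_i$, with no budget constraint. Under quasi-linearity this follows directly. Suppose some $x_i$ gave a strictly larger value. Shift its num\'eraire coordinate by $t = \langle p^*, e^i - x_i\rangle$ to make it budget-feasible. Because $p^*_0 = 1$ and utility is linear in $x_{i0}$, the shift leaves the Lagrangian value unchanged. The shifted bundle would then be budget-feasible with $u_i$ strictly above $u_i(x^*_i)$, where we use that the budget binds at $x^*_i$. This contradicts individual optimality. The subtle point is that the shifted bundle must remain in $X_i$. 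We handle this either by taking $X_i$ unbounded in the num\'eraire coordinate (with compactness applying to the remaining coordinates), or by noting that a convex $X_i$ and a concave $u_i$ reduce the claim to a local perturbation, where a small shift suffices. This membership issue is the main obstacle, and the one we would address with most care.

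\textbf{Step 3: aggregate.} Summing the inequalities of Step 2 over $i$ gives
\[
U(x) - \langle p^*, Z(x)\rangle \le U(x^*) - \langle p^*, Z(x^*)\rangle = U(x^*) \qquad \text{for all } x \in X,
\]
where the equality uses complementary slackness $\langle p^*, Z(x^*)\rangle = 0$. Together with $Z(x^*) \le 0$ and $p^* \ge 0$, this is exactly the saddle-point (KKT) condition for $\min\{-U(x) : x \in X,\ Z(x) \le 0\}$. Hence $x^*$ minimizes $F(\cdot,0)$, and $p^*$ is a Lagrange multiplier. Compactness of $X$ and continuity of $U$ are used only to guarantee that the minimum of $F(\cdot,0)$ is attained and finite.
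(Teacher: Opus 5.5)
\textbf{Gap in Step~2.} The gap is the one you flag yourself in Step~2. Under the stated hypotheses it cannot be closed, because with compact $X$ the proposition is false.

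Take good~$0$ as num\'eraire plus one other good. Let $X_i = [0,10]^2$ and $u_i(x_i) = x_{i0} + 2\sqrt{x_{i1}}$ for $i=1,2$, with $e^1 = (0,\tfrac12)$ and $e^2 = (5,\tfrac32)$. At $p^* = (1,\sqrt{2/3})$ the no-trade allocation $x^*_i = e^i$ is a Walrasian equilibrium. Agent~2's interior first-order condition $1/\sqrt{x_{21}} = p^*_1$, together with its budget, returns $x^*_2 = (5,\tfrac32)$. Agent~1 values good~1 at the margin at $\sqrt2 > p^*_1$, but cannot buy more: its budget and the floor $x_{10} \ge 0$ force $x_{11} \le \tfrac12$.

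Both goods are desired by both agents, $X$ is compact and $U$ is continuous. Yet $U(x^*) = 5+\sqrt2+\sqrt6 < 9 = U(\hat x)$ for $\hat x_1 = (0,1)$, $\hat x_2 = (5,1)$, which satisfies $Z(\hat x) = 0$.

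Your Step~2 breaks exactly at agent~1. Its Lagrangian is, up to a constant, $2\sqrt{x_{11}} - p^*_1 x_{11}$, maximized at $x_{11} = \tfrac32$. The num\'eraire shift that restores budget feasibility then makes $x_{10}$ negative.

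Neither of your repairs is available:
\begin{itemize}
\item Taking the num\'eraire coordinate unbounded contradicts compactness of $X$.
\item The local-perturbation version is sound as an argument: concavity of the Lagrangian along the segment from $x^*_i$ reduces everything to num\'eraire shifts of size $\bigO(t)$. But it needs shifts of either sign to keep nearby points in $X_i$, i.e.\ $x^*_{i0}$ strictly inside the num\'eraire range. That is precisely what fails here.
\end{itemize}

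Step~1 has the same defect. With a bounded num\'eraire an agent may sit on the upper face of $X_i$. So neither $p^*_0>0$ nor a binding budget follows from some good being desired, and the ``mild interiority condition'' you promise is not implied by the hypotheses.

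\textbf{What is needed.} The missing ingredient is a hypothesis, not a cleverer argument. One option is $X_i = [a_i,b_i]\times K_i$ with $K_i$ convex and $a_i < x^*_{i0} < b_i$ for every $i$; another is to drop compactness and let the num\'eraire be unrestricted. Under such a hypothesis:
\begin{itemize}
\item $p^*_0>0$ and budget binding follow at once;
\item your local Step~2 goes through;
\item Step~3 gives the standard saddle-point conclusion, with the multiplier being the price normalized by $p^*_0 = 1$ rather than an arbitrary positive multiple of it.
\end{itemize}

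\textbf{Comparison with the paper.} The paper gives no proof to compare against; it only identifies the statement with the First Welfare Theorem. That theorem gives Pareto optimality of $x^*$, which does hold in the example above. The step from Pareto optimality to maximality of the unweighted sum $U$ is exactly where free transfers of the num\'eraire are used. So the paper's justification rests on the same unstated interiority assumption you ran into.
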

	
	The statement is the First Welfare Theorem in the quasi-linear
	setting of the example below; for general utilities the analogue
	holds with $U$ replaced by the Negishi-weighted aggregate
	$U_\alpha(x) = \sum_i \alpha_i u_i(x_i)$, with weights $\alpha_i$
	proportional to the inverses of the agents' marginal utilities of
	income at equilibrium~\citep{Negishi60}. The relaxation analysis
	below applies verbatim to either form.
	
	We define the $\lambda$-Rockafellian relaxation of $F$ as
	\begin{equation}\label{eq:relax}
		\min_{x \in \IR^L,\, v \in \IR^L} \; F(x, v) + \tfrac{\lambda}{2}\|v\|_2^2
		\;=\; \min_{x \in X} \;\; -U(x) + \tfrac{\lambda}{2}\|Z(x)_+\|_2^2,
	\end{equation}
	where $Z(x)_+$ denotes the componentwise positive part. The
	reduction on the right follows from the inner minimization in
	$v$, attained at $v^* = Z(x)_+$. Notice that, when an equilibrium
	$x^*$ exists, $\|Z(x^*)_+\|_2 = 0$ and the relaxation recovers
	Proposition~\ref{prop:planner} in the limit $\lambda \to \infty$.
	
	\subsection{Non-existence: the minimum-norm residual}
	
	\begin{assumption}[Standing]\label{ass:standing}
		The set $X$ is non-empty and compact, $U$ is continuous on $X$,
		and $Z$ is an affine mapping.
	\end{assumption}
	
	When no $x \in X$ satisfies $Z(x) \le 0$, no Walrasian
	equilibrium exists. We show that the relaxation~\eqref{eq:relax}
	nonetheless remains well posed, and that its solutions converge
	to the minimum-norm slack we now describe. Let
	$r := \min\bigl\{\|Z(x)_+\|_2 : x \in X\bigr\}$, which is attained
	by continuity of $Z(\cdot)_+$ and compactness of $X$.
	
	\begin{theorem}[Minimum-norm residual, $\ell_2$ form]\label{thm:nonex}
		Under Assumption~\ref{ass:standing}, suppose the feasibility set
		$\{x \in X : Z(x) \le 0\}$ is empty. Let $x^*(\lambda) \in X$ be
		any $1/\lambda$-approximate minimizer of
		$-U(x) + (\lambda/2)\|Z(x)_+\|_2^2$ over $X$. Then the residual
		$v^*(\lambda) := Z(x^*(\lambda))_+$ has a subsequential limit $v^*_\infty$
		that satisfies
		\begin{equation}\label{eq:minnorm}
			\|v^*_\infty\|_2 = r \;=\; \min\bigl\{\|v\|_2 \,:\, v \in \IR^L_+,\; \exists\, x \in X \text{ with } Z(x) \le v\bigr\}.
		\end{equation}
	\end{theorem}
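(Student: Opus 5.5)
The plan is a standard penalty-method argument: compare the value at $x^*(\lambda)$ with the value at a minimizer of the residual, bound the excess $\|v^*(\lambda)\|_2^2 - r^2$ by $\bigO(1/\lambda)$, and pass to a convergent subsequence.

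\textbf{Step 1: identify $r$ with the right-hand side of \eqref{eq:minnorm}.} Fix $x\in X$. Any admissible $v\ge 0$ with $Z(x)\le v$ satisfies $v\ge Z(x)_+$ componentwise, so $\|v\|_2\ge\|Z(x)_+\|_2$. Equality is attained at $v=Z(x)_+$. Minimizing over $x$ gives the second equality in \eqref{eq:minnorm}, and the minimum is attained by compactness of $X$ and continuity of $x\mapsto Z(x)_+$.

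\textbf{Step 2: fix the constants.} Pick $\bar x\in X$ with $\|Z(\bar x)_+\|_2=r$. Set $M:=\max_X U-\min_X U<\infty$, which is finite because $U$ is continuous on the compact set $X$.

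\textbf{Step 3: compare values.} The $1/\lambda$-approximate optimality of $x^*(\lambda)$ gives
\[
-U(x^*(\lambda))+\tfrac{\lambda}{2}\|v^*(\lambda)\|_2^2\;\le\; -U(\bar x)+\tfrac{\lambda}{2}r^2+\tfrac1\lambda .
\]
Rearranging,
\[
r^2\;\le\;\|v^*(\lambda)\|_2^2\;\le\; r^2+\tfrac{2M}{\lambda}+\tfrac{2}{\lambda^2}.
\]
The lower bound holds by the definition of $r$, since $x^*(\lambda)\in X$.

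\textbf{Step 4: extract the limit.} Because $Z$ is affine and $X$ is compact, $Z(X)$ is bounded, so $v^*(\lambda)$ stays in a bounded subset of $\IR^L_+$. Take $\lambda_k\to\infty$ with $v^*(\lambda_k)\to v^*_\infty$. We may also arrange $x^*(\lambda_k)\to\hat x\in X$. Passing to the limit in Step 3 gives $\|v^*_\infty\|_2=r$. By continuity, $v^*_\infty=Z(\hat x)_+$, so $v^*_\infty$ is feasible for the right-hand side of \eqref{eq:minnorm} and is a minimizer there.

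\textbf{Remarks on difficulty.} No step is a real obstacle. The only care needed is to track the approximation tolerance $1/\lambda$ in the value comparison of Step 3. The emptiness hypothesis is not actually used; it only ensures $r>0$, which is what makes the statement informative. If one also wants uniqueness of $v^*_\infty$, so that the full sequence converges, the natural route is to note that $C:=\{v\ge0:\exists x\in X,\ Z(x)\le v\}=(Z(X)+\IR^L_+)\cap\IR^L_+$ is convex when $X$ is convex and $Z$ is affine. The minimum-norm element of a closed convex set is unique. I would state this as a remark rather than part of the proof, since convexity of $X$ is not in Assumption~\ref{ass:standing}.
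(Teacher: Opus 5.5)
Your proposal is correct and follows the same route as the paper's proof. You compare the penalized value at $x^*(\lambda)$ with that at a (near-)minimizer of $\|Z(\cdot)_+\|_2$ to get $r^2 \le \|v^*(\lambda)\|_2^2 \le r^2 + \bigO(1/\lambda)$, extract a convergent subsequence by Bolzano--Weierstrass, and in addition use the attained minimizer $\bar x$ rather than the paper's $\eps$-approximate $\tilde x$ and explicitly verify the second equality in \eqref{eq:minnorm} and that $v^*_\infty = Z(\hat x)_+$ lies in the feasible range, which the paper's proof leaves implicit.
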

	
	\begin{proof}
		Let $U_{\sup} := \sup_{x \in X} U(x) < \infty$ by
		Assumption~\ref{ass:standing}, and write
		$\phi_\lambda(x) := -U(x) + (\lambda/2)\|Z(x)_+\|_2^2$ for the
		objective. Since $\phi_\lambda \ge -U_{\sup}$ on $X$, the infimum
		$\phi^*_\lambda := \inf_{x \in X} \phi_\lambda(x)$ is finite. Choose
		$x^*(\lambda) \in X$ with
		$\phi_\lambda(x^*(\lambda)) \le \phi^*_\lambda + 1/\lambda$. For
		$\eps > 0$, pick $\tilde x \in X$ with
		$\|Z(\tilde x)_+\|_2^2 \le r^2 + \eps$. Then
		$\phi^*_\lambda \le \phi_\lambda(\tilde x) = -U(\tilde x) + (\lambda/2)\|Z(\tilde x)_+\|_2^2$,
		so
		\[
		-U(x^*(\lambda)) + \tfrac{\lambda}{2}\|Z(x^*(\lambda))_+\|_2^2
		\;\le\; -U(\tilde x) + \tfrac{\lambda}{2}(r^2 + \eps) + \tfrac{1}{\lambda},
		\]
		and after rearrangement,
		\begin{equation}\label{eq:rate}
			\|Z(x^*(\lambda))_+\|_2^2 \;\le\; r^2 + \eps + \tfrac{2}{\lambda}\bigl(U_{\sup} - U(\tilde x)\bigr) + \tfrac{2}{\lambda^2}.
		\end{equation}
		Sending $\lambda \to \infty$ then $\eps \to 0$ yields
		$\limsup_\lambda \|Z(x^*(\lambda))_+\|_2 \le r$. The reverse
		inequality follows from the definition of $r$. By
		\eqref{eq:rate}, the family $\{Z(x^*(\lambda))_+\}$ is bounded
		in $\IR^L$ for $\lambda$ bounded away from zero, so by
		Bolzano--Weierstrass it admits a subsequential limit
		$v^*_\infty$, which then satisfies $\|v^*_\infty\|_2 = r$.
	\end{proof}
	
	\begin{remark}[Norm choice and modeling interpretation]\label{rem:norms}
		Replacing $(\lambda/2)\|v\|_2^2$ in~\eqref{eq:relax} by
		$\lambda\|v\|_1$ (resp.\ $\lambda\|v\|_\infty$) yields an
		analogue of Theorem~\ref{thm:nonex} in which $v^*_\infty$
		minimizes the corresponding norm: under $\ell_1$, $v^*_\infty$
		minimizes the total intervention and is sparse in a few goods;
		under $\ell_\infty$, it minimizes the maximum per-good shortage
		and represents a balanced rationing; under $\ell_2$, it is the
		Euclidean projection. The choice is a modeling decision,
		fixing the operational meaning of ``closest equilibrium-admitting
		economy'' within a given application.
	\end{remark}
	
	\begin{remark}[Convergence rate and conditioning]\label{rem:rate}
		The bound \eqref{eq:rate} delivers convergence at rate
		$\bigO(1/\lambda)$, with a constant proportional to the spread
		of $U$ along the trajectory. In applications involving
		subsistence constraints this constant scales poorly,
		necessitating large $\lambda$. In the example below we take
		$\lambda$ up to $10^7$ and obtain stable iterates with a
		quasi-Newton solver (L-BFGS-B) under warm starts along the
		$\lambda$ sweep.
	\end{remark}

	\section{An illustration: the Shapley--Shubik $2 \times 2$ economy}\label{sec:numerics}
	
	We consider a parametrization of the $2 \times 2$ economy of
	\citet{ShapShub77}, generalized by \citet{BergSY09} and reproduced
	in \citet[Ex.~17.F.1]{MWG95}. The two agents have quasi-linear
	utilities
	\[
	u_1(x_{11}, x_{21}) = x_{11} - \tfrac{1}{8}x_{21}^{-8},
	\qquad u_2(x_{12}, x_{22}) = x_{22} - \tfrac{1}{8}x_{12}^{-8},
	\]
	with survival sets $X_i = [0.3, 3]^2$. With endowments
	$e^1 = (2, r)$ and $e^2 = (r, 2)$ for $r = 2^{8/9} - 2^{1/9}$, the
	economy admits three Walrasian equilibria at
	$p^* \in \{1/2, 1, 2\}$ (each verified to machine precision), as
	constructed in \citet{BergSY09}: strong income effects act in the
	opposite direction from substitution effects, producing a
	non-monotone aggregate excess-demand function. Classical
	t\^atonnement and the augmented-Walrasian iteration of
	\citet{DerJW19} both converge in this baseline regime.
	
	\paragraph{Stressed variant.} We shrink endowments to $e^1 = (0.50, 0.05)$
	and $e^2 = (0.05, 0.50)$. The aggregate endowment $(0.55, 0.55)$ is
	strictly below twice the survival floor $(0.6, 0.6)$, so the
	feasibility set $\{x \in [0.3, 3]^4 : Z(x) \le 0\}$ is empty: no
	Walrasian equilibrium exists. The infeasibility floor admits a closed
	form. The minimum of $\|Z(x)_+\|_2^2$ over $X$ is attained at
	$x^* = (0.3, 0.3, 0.3, 0.3)$, giving
	\[
	r \;=\; \sqrt{(0.6 - 0.55)^2 + (0.6 - 0.55)^2} \;=\; \sqrt{2} \cdot 0.05 \;\approx\; 0.0707.
	\]
	
	We solve relaxation~\eqref{eq:relax} via L-BFGS-B from a uniform
	interior initial allocation, sweeping $\lambda \in [10^0, 10^7]$
	logarithmically with warm starts. Figure~\ref{fig:slack} reports the
	residual $v^*(\lambda)$ component-wise (left) and its $\ell_2$-norm
	(right). The norm decreases monotonically and converges to
	$\|v^*_\infty\|_2 \approx 0.0707$, matching the analytical value
	predicted by Theorem~\ref{thm:nonex}. Each per-good shortage equals
	$0.05$, the per-coordinate gap of aggregate endowment relative to
	survival.
	
	\begin{figure}[htbp]
		\centering
		\includegraphics[width=\linewidth]{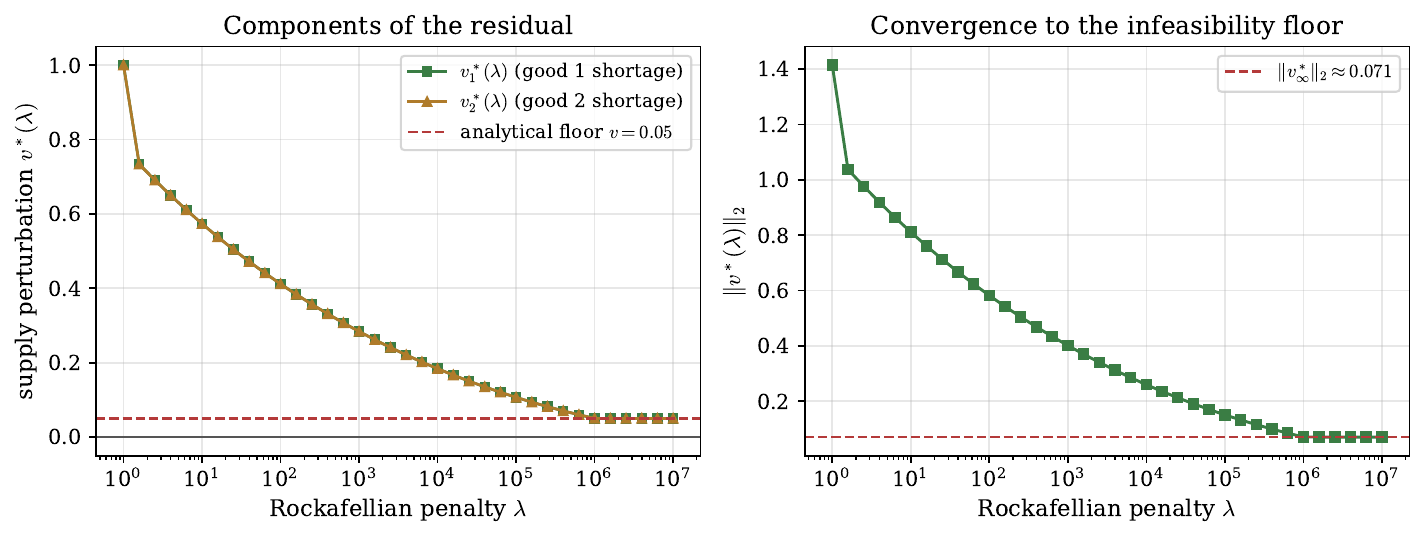}
		\caption{\label{fig:slack}Stressed Shapley--Shubik economy. Left:
			components of $v^*(\lambda) = Z(x^*(\lambda))_+$. Right:
			$\|v^*(\lambda)\|_2$ converges to the analytical infeasibility
			floor $\sqrt{2} \times 0.05 \approx 0.0707$ predicted by
			Theorem~\ref{thm:nonex}.}
	\end{figure}
	
	The residual $v^*_\infty$ admits a direct reading. Under
	$\ell_2$, the economy lies $0.0707$ units away in Euclidean norm
	from the nearest equilibrium-admitting one. Under $\ell_1$, the
	corresponding minimum is $0.10$ units in total; under
	$\ell_\infty$, it is $0.05$ in the worst-case good. By contrast,
	classical t\^atonnement diverges in this regime, and the
	augmented-Walrasian iteration of \citet{DerJW19} stagnates
	without producing a residual that admits an economic
	reading.\footnote{Both behaviors are reproduced in the
	accompanying repository.}

	\section{Concluding remarks}\label{sec:concl}
	
	We have proposed a Rockafellian relaxation of the Walrasian
	equilibrium problem that remains well posed when no equilibrium
	exists, and we have shown that its residual converges, as the
	penalty parameter grows, to a vector $v^*_\infty$ of minimum norm
	in the feasible range of the excess-demand map. We interpret
	$\|v^*_\infty\|$ as the minimum-norm slack required to restore
	feasibility, and we have argued that the choice of norm in the
	penalty fixes the policy reading.
	
	Two extensions follow naturally. In incomplete-markets economies
	\citep{DerJWXR}, $v^*_\infty$ quantifies a minimum-norm
	no-arbitrage perturbation; in two-stage stochastic economies, the
	relaxation decomposes by scenario, and the stability and rate
	properties of \citet{DerR25} extend directly to that decomposed
	setting. We note one limitation of our approach: the closed-form
	profile-out of $v$ in~\eqref{eq:relax} relies on the separability
	of $\|v\|_p^p$ in $v$ alone, so for general perturbations one
	would carry $v$ explicitly and solve the Rockafellian jointly in
	$(x, v)$.
	
	\paragraph{Competing interests.} The author declares no competing
	interests.
	
	\section*{Code and data availability}
	A Python implementation reproducing every number and figure in this
	note is available at
	\url{https://github.com/jderide/rockafellian-equilibria}.
	
	\bibliographystyle{abbrvnat}

\end{document}